\def\myQED{\mbox{\rule[0pt]{1.5ex}{1.5ex}}}
\DeclareMathOperator{\esssup}{esssup}
\newtheorem{thm}{Theorem}[section]
\newtheorem{prop}[thm]{Proposition}
\newtheorem{rmk}[thm]{Remark}
\newcommand{\no}{\nonumber}
\numberwithin{equation}{section}
\definecolor{webgreen}{rgb}{0,.5,0}
\definecolor{webbrown}{rgb}{.8,0,0}
\definecolor{emphcolor}{rgb}{0.95,0.95,0.95}
\ifpdf \hypersetup{pdftex,
            pdfstartview=FitH, 
            bookmarksopen=true,
            bookmarksnumbered=true
} \else \hypersetup{dvips} \fi
\title{Byzantine Fault Tolerant Distributed Quickest Change Detection }\thanks{ Erhan Bayraktar is supported by the National Science Foundation under grant DMS-11-18673. Lifeng Lai is supported by the National Science Foundation under grant DMS-12-65663.}
\author{Erhan Bayraktar}
\address[Erhan Bayraktar]{Department of Mathematics, University of Michigan, 530 Church Street, Ann Arbor, MI 48104, USA}
\email{erhan@umich.edu}
\author{Lifeng Lai }
\address[Lifeng Lai]{Department of Electrical and Computer Engineering,  Worcester Polytechnic Institute, Worcester, MA 01609, USA}
\email{llai@wpi.edu}
\keywords{(Non-Bayesian) quickest change detection,  Byzantine fault tolerance, distributed sensor network, robust optimal stopping in continuous and discrete time.}
\date{\today}
\begin{document}


\begin{abstract}
We introduce and solve the problem of Byzantine fault tolerant distributed quickest change detection in both continuous and discrete time setups. In this problem, multiple sensors sequentially observe random signals from the environment and send their observations to a control center that will determine whether there is a change in the statistical behavior of the observations. We assume that the signals are independent and identically distributed across sensors. An unknown subset of sensors are compromised and will send arbitrarily modified and even artificially generated signals to the control center. It is shown that the performance of the the so-called CUSUM statistic, which is optimal when all sensors are honest, will be significantly degraded in the presence of even a single dishonest sensor. In particular, instead of in a logarithmically the detection delay grows linearly with the average run length (ARL) to false alarm. To mitigate such a performance degradation, we propose a fully distributed low complexity detection scheme. We show that the proposed scheme can recover the log scaling. We also propose a centralized group-wise scheme that can further reduce the detection delay.
\end{abstract}

\maketitle


\section{Introduction} \label{sec:intro}
In the quickest change detection problem, one observes a sequence of observations, whose probability density function (pdf) might change at an unknown time~\cite{Poor:Book:08}. The goal in this problem is to detect the presence of such a change with a minimum delay under certain false alarm constraints. This type of problem has a broad range of potential applications, such as quality control \cite{Roberts:Tech:66}, security \cite{Tartakovsky:TSP:06,Tartakovsky:TSPS:13}, wireless communications \cite{Lai:GLOBE:08}, and chemical or biological attack detection, etc.


In recent years, motivated by the rapid development of wireless sensor networks, distributed quickest detection has attracted much interest~\cite{Teneketzis:TAC:84,Veeravalli:TIT:01,Mei:TIT:05,Moustakides:ICIF:06,Tartakovsky:SA:08,Tartakovsky:ICIF:03,Tartakovsky:ICIF:06,Tartakovsky:Book:04,Crow:TAES:96}. In such networks, multiple sensors are deployed to facilitate the detection of abnormal activities. These sensors will send observations to a fusion center which will then make final detect decisions~\cite{Willett:TSP:00, Chamberland:JSAC:04}. In the existing distributed quickest change detection setup, the sensors are assumed to always report true observations. In certain applications, especially in the attack detection applications, sensors might be compromised. As a result, the compromised sensors might send modified or even artificially generated observations. In this paper, we consider the quickest detection problem under such \emph{Byzantine attacks}. The problems studied in this paper are related to the Byzantine fault tolerance problems~\cite{Lamport:TPLS:82} in which a component will fail in an arbitrary manner (instead of just stopping functioning). Our goal is to design Byzantine fault tolerant quickest detection schemes.

More specifically, we consider a setup with $N$ sensors, an unknown subset of which might be compromised by adversaries. These sensors observe signals sequentially from the environment. We assume that the signals are independent and identically distributed across sensors. If a sensor is honest, it will send its observations to a control center. If a sensor is compromised, it will send modified observations to the control center. We do not make any assumption on how the attacker will modify its observations. We consider a non-Bayesian setup in which the statistical behavior of the random observations from the environment will change at an unknown but fixed time. The goal of the control center is detect the presence of such a change based on the signals received from all sensors.

If all sensors are honest, it is well known that the cumulative sum (CUSUM) strategy is optimal under Lorden's setup~\cite{Lorden:AMS:71}.
We first show that the performance of CUSUM will be significantly degraded even if only one sensor is compromised. In particular, using a simple attack, the attacker can make the detection delay of CUSUM scale linearly with the average run length (ARL) to false alarm (see Proposition~\ref{prop:degradation}), while the detection delay of CUSUM scales in a logaritmaically with ARL to false alarm when there is no attacker.

To overcome such a performance degradation, we propose a low complexity, low communication overhead detection scheme. In this scheme, we ask each sensor to run a CUSUM locally using its own observed signal, and send one bit of indication to the control center once its CUSUM stops. The control center will raise a final alarm once it receives alarms from at least two sensors. We call this scheme the ``second-alarm'' strategy/scheme. Using tools from order statistics~\cite{David:Book:03}, we provide an upper-bound on the detection delay and an lower-bound on the ARL to false alarm that are valid under arbitrary attack strategies. We show that the derived upper-bound on the detection delay scales logarithmically with that of the lower-bound on the ARL to false alarm; see Theorems~\ref{thm:con} and \ref{thm:dis}. This implies that the proposed second-alarm strategy successfully recovers the logarithmic scaling. Furthermore, this scheme can be implemented in a fully distributed manner.

To further reduce the detection delay, we propose a group-wise strategy. In this group-wise strategy, we divide sensors into three groups. Each group will run a CUSUM using signals received in its group and raise an alarm once its CUSUM stops. The control center then raises the final alarm once at least two groups raise alarms. We show that the detection delay of the group-wise strategy scales in a logarithmically with the ARL to false alarm. Furthermore, the pre-log factor of the group-wise strategy is smaller; see Theorems~\ref{thm:dist-cont} and \ref{thm:dist-disc}. The main intuition for the deduction of the detection delay in the group-wise strategy is that CUSUM in each group can raise an alarm faster than CUSUM in each individual sensor. The disadvantage is that the group-wise strategy is not amenable to distributed implementation.

The remainder of the paper is organized as follows. In Section~\ref{sec:model}, we introduce the models discussed in the paper. In Section~\ref{sec:Brownian}, we study the continuous-time Brownian motion model. In Section~\ref{sec:Discrete}, we discuss the discrete-time model. 

\section{Problem Formulation} \label{sec:model}
Figure~\ref{fig:model} illustrates the model under consideration. Consider a system with $N\geq 3$ sensors that are deployed to monitor the environment. We assume that one sensor might be compromised. The schemes discussed in the paper can be properly modified to handle multiple compromised sensors. We use $n^c$ to denote the index of the compromised sensor. The value of $n^c$ is unknown. 
We assume that the sensors are connected with a fusion center, which will make detection decisions based on the observations from the sensors.
Each sensor $n\in \{1,\cdots, N\}$ observes $\{\xi_{t}^{(n)};t\geq 0\}$. Let $\bar{\xi}_{t}^{(n)}$ be the signal sensor $n$ reports to the fusion center. If sensor $n$ is honest, $\bar{\xi}_{t}^{(n)}=\xi_{t}^{(n)}$. For the compromised sensor $n^c$, $\bar{\xi}_{t}^{(n^c)}$ depends on the attacker's attack strategy $\mathfrak{s}$, which can use the information $\xi_{t}^{(n^c)}$ observed by the attacker. We do not make any assumption on the attack strategy. 
We take $\mathcal{F}=\bigcup_{t \geq 0}\mathcal{F}_{t}$, where $\mathcal{F}_t=\sigma\left\{ \left(\bar{\xi}_{s}^{(1)},\cdots,\bar{\xi}_{s}^{(N)}\right); s\leq t\right\}$. The goal of the fusion center is to detect the presence of a change in the environment based on the processes $(\bar{\xi}_t^{(1)},\cdots,\bar{\xi}_t^{(N)})$.

With a little of abuse of notation, we use the following modified Lorden's performance index as the delay measure
\begin{equation*}
d(T)=\sup\limits_{\tau,n^c, \mathfrak{s}}\esssup E_{\tau,n^c, \mathfrak{s}}[(T-\tau)^+|\mathcal{F}_{\tau}].
\end{equation*}
Here $\esssup$ denotes the essential supremum, the $L^{\infty}$ norm, and $T$ is the stopping rule that the fusion center uses to raise an alarm.
$E_{\tau,n^c,\mathfrak{s}}\{(T-\tau)^+|\mathcal{F}_{\tau}\}$ is the expectation under the measure when the change occurs at time $\tau$, the compromised sensor is $n^c$ and the compromised sensor uses the attack strategy $\mathfrak{s}$. We will use $E_{\tau,0,\phi}[\cdot]$ to denote the expectation under the measure when the change occurs at time $\tau$ and all sensors are honest. We will also use $E_{\infty}$ and $E_{0}$ to denote the expectations with regards to one honest sensor when the change points occurs at $\infty$ and $0$ respectively.

We also have the following average run length (ARL) to false alarm measure
\begin{equation*}
\text{ARL}(T)=\inf\limits_{n^c, \mathfrak{s}} E_{\infty,n^c,\mathfrak{s}}[T].
\end{equation*}
Here $E_{\infty,n^c,\mathfrak{s}}[T]$ denotes the expectation under the measure when the change does not occur, the compromised sensor is $n^c$ and the compromised sensor uses the attack strategy $\mathfrak{s}$.

With the above defined metrics, we aim to solve the following optimization problem
\begin{equation}\label{eq:criterion}
\begin{split}
&\inf\limits_{T}\quad d(T)\\
\text{such that} &\; \text{ARL}(T)\geq \gamma.
\end{split}
\end{equation}
In other words, we aim to design a stopping rule that minimizes the detection delay while making sure that ARL to false alarm is larger than a given threshold $\gamma$.



\begin{figure}[thb]
\centering
\includegraphics[width=0.6 \textwidth]{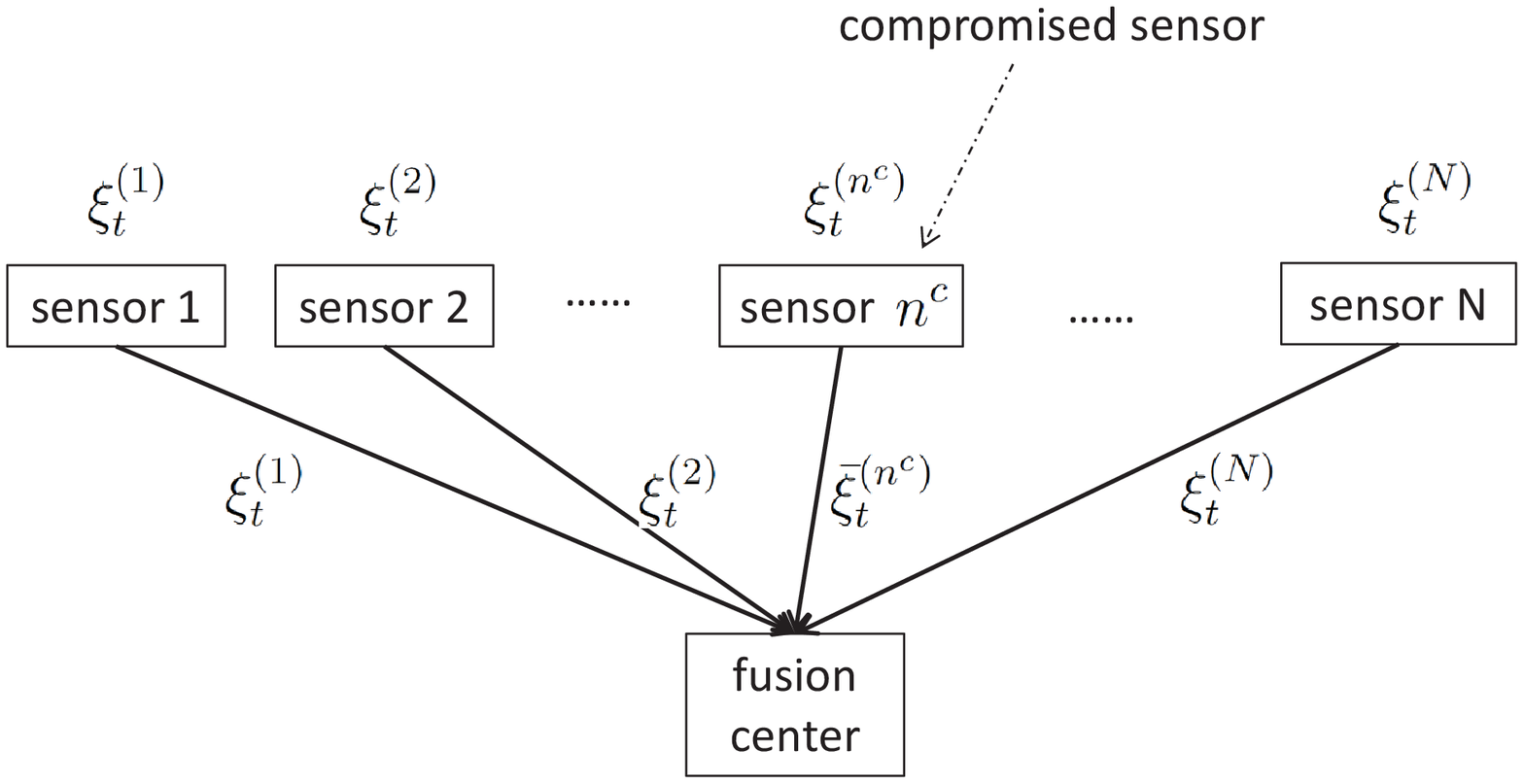}
\caption{Model}
\label{fig:model}
\end{figure}

In this paper, we will focus on two different signal models: a continuous-time Brownian motion model and a discrete-time model.

In the Brownian motion model, we have
\begin{equation*}
\xi_{t}^{(n)}= \mu (t-\tau)^+ + W_t^{(n)},
\end{equation*}
where $\tau$ is an unknown constant at which signals observed at all sensors change, $\mu$ is the signal strength at each node after the change, $\left\{W_t^{(n)}\right\}_{n \in \{1,\cdots N\}}$ are independent standard Brownian motions. 

In the discrete-time model, the time $t$ takes integer values, and we have the following probability density functions (pdf)
\begin{equation*}
\begin{split}
\xi_{t}^{(n)}\sim\left\{\begin{array}{ll}f_{0},& t\leq \tau,\\
                                   f_1,&t>\tau.
                                   \end{array}\right.
\end{split}
\end{equation*}
Here $f_0$ is the pdf of each observation before the change and $f_1$ is the pdf of each observation after the change.




\section{Continuous-time Brownian Motion Model}\label{sec:Brownian}
In this section, we will first discuss the impact on the CUSUM test of the presence of a single attacker. We show that, instead of scaling logarithmically, \emph{the detection delay of CUSUM scales linearly with ARL to false alarm under a simple attack}. We then propose a fully distributed scheme that allows us to recover the logarithmic scaling. We further design a centralized scheme that allows us to further reduce the detection delay.

\subsection{Impact on the CUSUM test}\label{sec:CUSUM}
We first show that the presence of even a single attacker has significant impact on the performance of CUSUM.

When all nodes are honest, i.e., $\bar{\xi}_t^{(n)}=\xi_t^{(n)}$, $\forall n$, the non-Bayesian quickest change detection problem under Lorden's performance index has been well understood. The optimal solution is the continuous-time version of Page's CUSUM stopping rule; see e.g.~\cite{Beibel:AnS:96,Shiryaev:RMS:96}. In particular, if we define
\begin{equation}\label{eq:cusumstat}
y_t=u_t-m_t,
\end{equation}
with
\begin{equation*}
\begin{split}
u_t=&\sum_{n=1}^N\mu\bar{\xi}_t^{(n)}-\frac{1}{2}N\mu^2t,\\
m_t=&\inf\limits_{0\leq s\leq t}u_s,
\end{split}
\end{equation*}
then, the CUSUM stopping rule is
\begin{eqnarray}\label{eq:cusum}
T_{\nu}=\inf\{t\geq 0;y_t\geq \nu\},
\end{eqnarray}
where $\nu$ is a threshold such that
\begin{equation}
\text{ARL}(T_{\nu})=E_{\infty,0,\phi}[T_{\nu}]=\frac{2(e^{\nu}-\nu-1)}{N\mu^2}=\gamma.
\end{equation}
is optimal for \eqref{eq:criterion} for the case when there is no adversary.
For this scheme, the detection delay is
\begin{equation}
d(T_{\nu})=\frac{2(e^{-\nu}+\nu-1)}{N\mu^2}.
\end{equation}

From here, we know that when the ARL constraint $\gamma\uparrow\infty$, we have
\begin{eqnarray}\label{eq:honest}
d(T_{\nu})=\frac{2}{N\mu^2}\left[\log(\text{ARL}(T_{\nu}))+\log\frac{N\mu^2}{2}-1+o(1)\right].
\end{eqnarray}
This means that \emph{the detection delay scales logarithmically with ARL}.

Now suppose one of the sensors is compromised, but the fusion center still employs the CUSUM test in~\eqref{eq:cusum} to detect the change, then a simple attack will significantly degrade the performance of CUSUM. In this attack, the attacker chooses to set $n^c=1$, and generates $\bar{\xi}_t^{(1)}$ according to the following dynamics irrespective of $\xi_t^{(1)}$
\begin{eqnarray}\label{eq:attack}
\bar{\xi}_{t}^{(1)}=N t+ W_t^{(1)},\quad t\geq 0.
\end{eqnarray}
We use $\mathfrak{s}^*$ to denote this particular attack strategy, and use $P_{0,1,\mathfrak{s}^*}$ to denote the probability measure when the change point occurs at time 0, and $P_{\infty,1,\mathfrak{s}^*}$ to denote the probability measure when the change point never occurs.

The impact of this simple attack on the performance of the CUSUM test~\eqref{eq:cusum} is computed in the following proposition.
\begin{prop}\label{prop:degradation}
Under the attack strategy specified in~\eqref{eq:attack}, the ARL to false alarm and the detection delay of CUSUM~\eqref{eq:cusum} are:
\begin{equation*}
\text{ARL}(T_{\nu})\leq E_{\infty,1,\mathfrak{s}^*}[T_{\nu}]=\frac{(e^{-\nu}+\nu-1)}{N\mu^2/2},
\end{equation*}
and
\begin{equation}\label{eq:delay}
d(T_{\nu})\geq E_{0,1,\mathfrak{s}^*}[T_{\nu}]\geq\frac{(e^{-\nu}+\nu-1)}{(3N/2-1)\mu^2},
\end{equation}
respectively.
As a result, instead of logarithmically, \emph{the detection delay of CUSUM scales linearly with ARL to false alarm under a simple attack}.
\end{prop}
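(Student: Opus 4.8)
The plan is to analyze the CUSUM statistic $y_t = u_t - m_t$ directly under the attack measure $P_{\cdot,1,\mathfrak{s}^*}$, exploiting the fact that under this attack the process $u_t$ is again a Brownian motion with drift, just with different parameters, so that the known exit-time formulas for CUSUM apply verbatim. First I would substitute the attack dynamics $\bar\xi_t^{(1)} = Nt + W_t^{(1)}$ into the definition of $u_t = \sum_{n=1}^N \mu \bar\xi_t^{(n)} - \tfrac12 N\mu^2 t$. Under $P_{\infty,1,\mathfrak{s}^*}$ the honest sensors $n=2,\dots,N$ contribute $\mu W_t^{(n)}$ (pure Brownian, no drift), while sensor $1$ contributes $\mu(Nt + W_t^{(1)})$; collecting terms, $u_t = \mu N^2 t - \tfrac12 N\mu^2 t + \mu\sum_{n=1}^N W_t^{(n)}$, which is a Brownian motion with drift $c_\infty := \mu N^2 - \tfrac12 N\mu^2 = \mu N(N - \tfrac{\mu}{2})$ — wait, I should be careful: $\mu N^2 - \tfrac12 N \mu^2$, and diffusion coefficient $\sqrt{N}\mu$ (variance $N\mu^2 t$). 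The key observation is that this drift is \emph{strictly positive} (for any reasonable $\mu$), so under $P_{\infty,1,\mathfrak{s}^*}$ the statistic $y_t$ behaves like a CUSUM driven by a positively-drifting process, and it will cross $\nu$ quickly.

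Next I would invoke the standard formula for the expected first passage time of a CUSUM statistic built from a Brownian motion with drift $c$ and variance parameter $\sigma^2$: the expected hitting time of level $\nu$ is $\tfrac{2\sigma^2}{c^2}\big(e^{\mp c\nu/\sigma^2} \pm c\nu/\sigma^2 - 1\big)$ with the sign depending on whether the drift is positive or negative — precisely the same identity already used in the excerpt to compute $E_{\infty,0,\phi}[T_\nu]$ and $d(T_\nu)$ in the honest case (there with $c = -\tfrac12 N\mu^2$ under $E_\infty$ and $c = +\tfrac12 N\mu^2$ under $E_0$). For the ARL bound, plug $c = c_\infty = \mu N^2 - \tfrac12 N\mu^2$ and $\sigma^2 = N\mu^2$; after simplification (noting $c_\infty/\sigma^2$ is some explicit constant) this yields $E_{\infty,1,\mathfrak{s}^*}[T_\nu] = \tfrac{(e^{-\nu}+\nu-1)}{N\mu^2/2}$ as claimed, and since $\text{ARL}(T_\nu) = \inf_{n^c,\mathfrak{s}} E_{\infty,n^c,\mathfrak{s}}[T_\nu]$ is an infimum, $\text{ARL}(T_\nu) \le E_{\infty,1,\mathfrak{s}^*}[T_\nu]$ is immediate. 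For the delay, under $P_{0,1,\mathfrak{s}^*}$ the change has occurred at $\tau=0$, so each honest sensor contributes drift $\mu$ to $\xi^{(n)}_t$, i.e. $\mu W^{(n)}_t$ becomes $\mu^2 t + \mu W^{(n)}_t$; the drift of $u_t$ becomes $c_0 = \mu N^2 + (N-1)\mu^2 - \tfrac12 N\mu^2 = \mu N^2 + (\tfrac{N}{2}-1)\mu^2$, and then $E_{0,1,\mathfrak{s}^*}[T_\nu] = \tfrac{2\sigma^2}{c_0^2}(e^{-c_0\nu/\sigma^2} + c_0\nu/\sigma^2 - 1)$. The lower bound $d(T_\nu) \ge E_{0,1,\mathfrak{s}^*}[T_\nu]$ holds because $\tau=0$ with $\mathcal{F}_0$ trivial is one feasible choice in the supremum defining $d(T)$; the further bound by $\tfrac{(e^{-\nu}+\nu-1)}{(3N/2-1)\mu^2}$ should come from a monotonicity/coarsening estimate on the exit-time formula — bounding $c_0$ from above by something like $(3N/2-1)\mu^2 \cdot(\text{something})$, or more likely using that $\tfrac{2\sigma^2}{c_0^2}(e^{-x}+x-1)$ with $x = c_0\nu/\sigma^2$ is decreasing in $c_0$ in the relevant regime and that $e^{-x}+x-1 \ge$ the corresponding expression with the honest exponent $\nu$; I would check the precise algebra to see which constant substitution gives exactly $3N/2-1$.

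Finally, to conclude the "linear scaling" claim, I would combine the two displays: set $\gamma = \text{ARL}(T_\nu) \le \tfrac{2(e^{-\nu}+\nu-1)}{N\mu^2}$, so $e^{-\nu}+\nu-1 \ge \tfrac{N\mu^2}{2}\gamma$ (using that the true ARL is at most the attacked value), hence $d(T_\nu) \ge \tfrac{1}{(3N/2-1)\mu^2} \cdot \tfrac{N\mu^2}{2}\gamma = \tfrac{N}{2(3N/2-1)}\gamma = \tfrac{N}{3N-2}\gamma$, which is linear in $\gamma$. The main obstacle I anticipate is \emph{not} the probabilistic content — the reduction to a drifted Brownian CUSUM is clean and the exit-time identity is standard — but rather nailing the exact arithmetic so that the messy constants $\mu N^2 - \tfrac12 N\mu^2$ and $\mu N^2 + (\tfrac N2 -1)\mu^2$ collapse to the clean denominators $N\mu^2/2$ and $(3N/2-1)\mu^2$ stated in the proposition; in particular one has to verify that the exponential terms $e^{\pm c\nu/\sigma^2}$ reduce to $e^{\pm\nu}$, which requires $c_\infty/\sigma^2 = 1$ and $c_0/\sigma^2$ to be the right value, and this is really a constraint that pins down how the attack amplitude $N$ in \eqref{eq:attack} was chosen — so part of the work is confirming that choice is exactly what makes the formulas line up.
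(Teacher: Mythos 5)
Your overall strategy is viable and is essentially the same reduction the paper makes: under the attack, $u_t$ is again a Brownian motion with drift, so the problem reduces to first-passage times of a reflected drifted Brownian motion. The paper executes this via Moustakides' It\^o-calculus argument with $f(y)=y+e^{-y}-1$ (obtaining only an \emph{inequality} for the delay, precisely because the post-change drift no longer equals $\sigma^2/2$), whereas you propose to use the exact ODE/hitting-time identity for the reflected process. Your route would in fact work and even gives a sharper (exact) expression, but the execution has concrete errors. First, the drift arithmetic: $\mu\bar\xi_t^{(1)}$ with $\bar\xi_t^{(1)}=Nt+W_t^{(1)}$ contributes drift $N\mu$, not $\mu N^2$. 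Moreover, for the stated constants to emerge the attack must be read as $\bar\xi_t^{(1)}=N\mu t+W_t^{(1)}$ (the paper's own proof uses $\sum_n\bar\xi_t^{(n)}=(2N-1)\mu t+\sqrt{N}W_t$ under $P_{0,1,\mathfrak{s}^*}$, which forces this reading); then $c_\infty=N\mu^2-\tfrac12 N\mu^2=\sigma^2/2$ exactly, which is why the ARL display is an equality, and $c_0=(3N/2-1)\mu^2$. Second, your generic formula has the wrong constants: the correct identity is $E[T_\nu]=\tfrac{\sigma^2}{2c^2}\bigl(e^{-2c\nu/\sigma^2}+2c\nu/\sigma^2-1\bigr)$, not $\tfrac{2\sigma^2}{c^2}\bigl(e^{-c\nu/\sigma^2}+c\nu/\sigma^2-1\bigr)$; with the wrong version even the honest-case check fails.

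The more substantive gap is the step you explicitly defer (``I would check the precise algebra''), which is the actual content of the delay bound. Writing $\rho_0=2c_0/\sigma^2=3-2/N>1$, the exact formula gives $E_{0,1,\mathfrak{s}^*}[T_\nu]=\tfrac{1}{\rho_0 c_0}\bigl(e^{-\rho_0\nu}+\rho_0\nu-1\bigr)$, and to reach the claimed bound $\tfrac{e^{-\nu}+\nu-1}{(3N/2-1)\mu^2}$ you must show $\rho\mapsto\tfrac{1}{\rho}\bigl(e^{-\rho\nu}+\rho\nu-1\bigr)=\nu-\tfrac{1-e^{-\rho\nu}}{\rho}$ is increasing in $\rho$ (it is, since $\tfrac{1-e^{-\rho\nu}}{\rho}=\int_0^\nu e^{-\rho s}\,ds$ is decreasing), so that the value at $\rho_0>1$ dominates the value at $\rho=1$. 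The paper sidesteps this entirely with the one-line inequality $c_0f'(y)+\tfrac{\sigma^2}{2}f''(y)\le c_0\bigl(f'(y)+f''(y)\bigr)=c_0$ inside It\^o's formula, which is cleaner. Your final combination of the two displays to get $d(T_\nu)\ge\tfrac{N}{3N-2}\,\mathrm{ARL}(T_\nu)$ is correct and is actually more explicit than what the paper writes.
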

\begin{proof}
For $u_t$, we have
\begin{equation*}
u_t=\sum_{n=1}^N\mu\bar{\xi}_t^{(n)}-\frac{1}{2}N\mu^2t=\mu\sum_{n=1}^N\bar{\xi}_t^{(n)}-\frac{1}{2}N\mu^2t.
\end{equation*}
Under probability measure $P_{0,1,\mathfrak{s}^*}$, the process $\xi_t=\sum_{n=1}^N\bar{\xi}_t^{(n)}$ can be written as\begin{equation*}
\xi_t=(2N-1)\mu t+\sqrt{N} W_t,
\end{equation*}
for some  standard Brownian motion $W_t$. As a result, under $P_{0,1,\mathfrak{s}^*}$
\begin{equation*}
u_t=(3N/2-1)\mu^2 t+\sqrt{N}\mu W_t.
\end{equation*}

Now following the techniques introduced in~\cite{Moustakides:AnS:04}, we will compute $E_{0,1,\mathfrak{s}^*}[T_{\nu}]$.
Let us denote $$f(y)=y+e^{-y}-1.$$ It is easy to check that
\begin{equation*}
f^{'}(y)=1-e^{-y}, \; f^{''}(y)=e^{-y}, \; f^{'}(y)+f^{''}(y)=1, \; f^{'}(0)=f(0)=0,
\end{equation*}
and $f^{''}(y)>0$, $\forall y.$

Applying It\^{o}'s rule, we obtain
\begin{equation*}
\begin{split}
f(y_t)-f(0)&=\int_0^tf^{'}(y_t)(du_t-dm_t)+\int_0^{t}\frac{N}{2}\mu^2f^{''}(y_t)dt\no\\
=&\int_0^t(3N/2-1)\mu^2f^{'}(y_t)dt+\int_0^t\sqrt{N}\mu f^{'}(y_t) dW_t-\int_0^tf^{'}(y_t)dm_t+\int_0^t\frac{N}{2}\mu^2f^{''}(y_t)dt\no.
\end{split}
\end{equation*}
Using $f'(0)=0$ we obtain that $-\int_0^tf^{'}(y_t)dm_t=0$. Hence, we are left with
\begin{eqnarray}\label{eq:cuat}
f(y_t)-f(0)&=&\int_0^t(3N/2-1)\mu^2f^{'}(y_t)dt+\int_0^t\sqrt{N}\mu f^{'}(y_t) dW_t+\int_0^t\frac{N}{2}\mu^2f^{''}(y_t)dt\no\\
&\overset{(a)}\leq&\int_0^t(3N/2-1)\mu^2(f^{'}(y_t)+f^{''}(y_t))dt+\int_0^t\sqrt{N}\mu f^{'}(y_t) dW_t\no\\
&\overset{(b)}=&(3N/2-1)\mu^2t+\int_0^t\sqrt{N}\mu f^{'}(y_t) dW_t,
\end{eqnarray}
in which $(a)$ in the above equation is due to the fact that $f^{''}(y)>0$, and $(b)$ is due to the fact that $f^{'}(y)+f^{''}(y)=1$. Now, \eqref{eq:delay} easily follows by evaluating~\eqref{eq:cuat} at $T_{\nu}$ and noticing the second term in~\eqref{eq:cuat} is 0. The computation of $E_{\infty,1,\mathfrak{s}^*}[T_{\nu}]$ is similar to that of~\cite{Moustakides:AnS:04}. In particular, one can define $$g(y)=-y+e^{y}-1,$$ and then follow the similar steps as above.
\end{proof}

\subsection{A Fully Distributed Byzantine Attack Resistant Change Detection Scheme}\label{sec:dis}
In this section, we propose an intuitive scheme that enables us to recover the logarithmic scaling. In this scheme, the fusion center first runs $N$ independent CUSUMs, $T_h^{(n)}$, $n=1,\cdots, N$, one for each signal $\bar{\xi}_t^{(n)}$ received from sensor $n$:
\begin{equation*}
T_h^{(n)}=\inf\{t\geq 0;y_t^{(n)}\geq h\},
\end{equation*}
in which
\begin{equation*}
y_t^{(n)}=\mu\bar{\xi}_t^{(n)}-\frac{1}{2}\mu^2t-\inf\limits_{s\leq t} \left(\mu\bar{\xi}_s^{(n)}-\frac{1}{2}\mu^2s\right).
\end{equation*}
We call $T_h^{(n)}$ the time at which sensor $n$ raises an alarm. In our scheme, the fusion center will ignore the first alarm from sensors, and will raise the final alarm once a second sensor raises an alarm. Let $T_h$ be the time when the fusion center raises the final alarm. From the description above, we know that $T_h$ is the same as the second order statistics of $N$ i.i.d. random variables $T_h^{(1)},\cdots,T_h^{(N)}$. Following the notation in order statistics~\cite{David:Book:03}, we use $T_{(2), N}$ to denote the second order statistics of $N$ random variables $T_h^{(1)},\cdots,T_h^{(N)}$. As a result, the time at which the fusion center raises an alarm is $T_h \triangleq T_{(2), N}$.

Now, we analyze the detection delay $d(T_h)$ and ARL to false alarm $\text{ARL}(T_h)$. We have the following result:
\begin{thm}\label{thm:con}
The detection delay of $T_h$ scales with ARL at most logarithmically:
\begin{equation*}
d(T_h)\leq \frac{4}{\mu^2}\left(\log (\text{ARL}(T_h))+\log\frac{(N-1)\mu^2}{2}-1+o(1)\right).
\end{equation*}
\end{thm}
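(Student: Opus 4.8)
The plan is to bound $d(T_h)$ and $\mathrm{ARL}(T_h)$ separately and then eliminate $h$. Write $T_{(1),N} \le T_{(2),N} = T_h$ for the order statistics of the $N$ i.i.d.\ single-sensor CUSUMs $T_h^{(n)}$. The key structural observation is that the adversary controls only one sensor: if we discard the compromised sensor there remain $N-1$ honest sensors running genuine i.i.d.\ CUSUMs, and the second sensor to stop overall stops no later than the \emph{first} of those $N-1$ honest CUSUMs to stop (because even in the worst case the compromised sensor contributes at most one of the two required alarms). So for the delay I would use
\[
d(T_h) \;\le\; d\bigl(T_{(1),N-1}^{\mathrm{honest}}\bigr) \;=\; E_0\!\left[\min_{1\le k\le N-1} T_h^{(k)}\right],
\]
where each $T_h^{(k)}$ is an honest post-change CUSUM. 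Conversely, for the ARL I would use that at most one of the two alarms needed can come from the compromised sensor, so before the change $T_h$ dominates the \emph{first} stopping time among the $N-1$ honest sensors' CUSUMs run under $E_\infty$:
\[
\mathrm{ARL}(T_h) \;\ge\; E_\infty\!\left[\min_{1\le k\le N-1} T_h^{(k)}\right].
\]
(One must be slightly careful: "at least two sensors have raised an alarm" with one adversary means at least one honest sensor has raised an alarm, hence $T_h \ge T_{(1),N-1}^{\mathrm{honest}}$ under $P_\infty$; this is the inequality that makes the ARL bound robust to $\mathfrak{s}$.)

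Next I would compute the two expectations of the minimum of $N-1$ i.i.d.\ single-sensor CUSUM times. Each $T_h^{(k)}$ is a one-dimensional CUSUM with drift parameter $\mu$ (not $N\mu$), so by the continuous-time CUSUM formulas already used above, $E_\infty[T_h^{(k)}] = \frac{2(e^h - h - 1)}{\mu^2}$ and $E_0[T_h^{(k)}] = \frac{2(e^{-h}+h-1)}{\mu^2}$. For the minimum of $N-1$ of them I expect the relevant estimates to be: under $E_0$, $E_0[\min_k T_h^{(k)}] \le \frac{2(e^{-h}+h-1)}{\mu^2}$ trivially (min $\le$ any one), which already gives a delay bound of the right form but with the wrong constant; to get the stated constant $4/\mu^2$ and the factor $(N-1)$ inside the log I would instead invoke the asymptotics of extreme order statistics (cf.\ \cite{David:Book:03}): the first of $N-1$ i.i.d.\ CUSUMs to fire has expected ARL under $P_\infty$ roughly $\frac{1}{N-1}E_\infty[T_h^{(1)}] \approx \frac{2 e^h}{(N-1)\mu^2}$, while the post-change delay $E_0[\min_k T_h^{(k)}]$ is, to leading order, still $\approx \frac{2h}{\mu^2}$ (the minimum of i.i.d.\ nearly-deterministic hitting times is close to the common mean when $h$ is large). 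Setting $\gamma = \mathrm{ARL}(T_h) \approx \frac{2 e^h}{(N-1)\mu^2}$ gives $h \approx \log\gamma + \log\frac{(N-1)\mu^2}{2}$, and then $d(T_h) \lesssim \frac{2h}{\mu^2} \approx \frac{2}{\mu^2}\bigl(\log\gamma + \log\frac{(N-1)\mu^2}{2}\bigr)$. The extra factor of $2$ (yielding $4/\mu^2$) presumably absorbs the overshoot/fluctuation correction in the minimum of the CUSUM times — I would get it from a clean upper bound on $E_0[\min_k T_h^{(k)}]$ in terms of $h$ rather than from delicate asymptotics.

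The main obstacle is the second step: making the heuristic "$\mathrm{ARL}(T_h) \gtrsim \frac{2e^h}{(N-1)\mu^2}$ uniformly over all attack strategies $\mathfrak{s}$" into a rigorous inequality, and simultaneously controlling $E_0[\min_k T_h^{(k)}]$ tightly enough to land the constant $4$. For the ARL direction the robustness comes for free from the domination $T_h \ge T_{(1),N-1}^{\mathrm{honest}}$ noted above, so $\mathrm{ARL}(T_h) \ge E_\infty[\min_{k\le N-1} T_h^{(k)}]$, and the latter must be lower-bounded; I would do this via the memoryless-type renewal structure of CUSUM or by a direct computation that the minimum of $N-1$ i.i.d.\ CUSUM stopping times has mean at least $\frac{1}{N-1}\cdot\frac{2(e^h - h - 1)}{\mu^2}$ minus lower-order terms (a Jensen/convexity or subadditivity argument on the hazard rate). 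For the delay direction the work is to show $E_0[\min_{k\le N-1} T_h^{(k)}] \le \frac{4}{\mu^2}(h - 1 + o(1))$ as $h\to\infty$; since each individual $E_0[T_h^{(k)}] \sim \frac{2h}{\mu^2}$ and the minimum is no larger, the factor-$2$ slack should be comfortably enough, so I expect this half to be routine once the $h(\gamma)$ substitution from the ARL bound is plugged in. Finally I would combine the two displays, solve for $h$ in terms of $\mathrm{ARL}(T_h)$, substitute, and collect error terms into $o(1)$.
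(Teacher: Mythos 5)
The ARL half of your plan is exactly the paper's argument: the worst case for false alarms is that the compromised sensor alarms immediately, so $\mathrm{ARL}(T_h)\geq E_{\infty}\bigl[\min_{k\leq N-1}T_h^{(k)}\bigr]=\int_0^\infty\bigl[P_\infty(T_h^{(1)}\geq t)\bigr]^{N-1}dt$, which the paper evaluates as $\frac{2}{(N-1)\mu^2}\bigl[e^h+\text{lower exponents}\bigr]$ using the explicit survival function of the single-sensor CUSUM. That part is fine.

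The delay half contains a genuine error: you claim that the second alarm overall occurs no later than the \emph{first} alarm among the $N-1$ honest sensors, ``because the compromised sensor contributes at most one of the two required alarms.'' That inequality goes the wrong way. The worst-case attack for the delay is for the compromised sensor to stay \emph{silent} forever; then both required alarms must come from honest sensors and $T_h$ equals the \emph{second} order statistic $T_{(2),N-1}$ of the honest CUSUMs, which is generically strictly larger than the first. In general one has $T_{(1),N-1}^{\mathrm{honest}}\leq T_h\leq T_{(2),N-1}^{\mathrm{honest}}$, so $E_0[\min_k T_h^{(k)}]$ is a \emph{lower} bound on the worst-case delay, not an upper bound; if your inequality were valid it would yield the pre-log factor $2/\mu^2$, which is better than the theorem claims and is false under the silent attack. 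Correspondingly, your guess that the factor of $2$ turning $2/\mu^2$ into $4/\mu^2$ is an ``overshoot/fluctuation correction'' misidentifies its source: in the paper it is precisely the price of waiting for the second honest alarm. Concretely, the paper writes $d(T_h)\leq E_0[T_{(2),N-1}]=\int_0^\infty\bigl(P^{N-1}+(N-1)(1-P)P^{N-2}\bigr)dt$ with $P=P_0(T_h^{(1)}\geq t)$, notes this is non-increasing in $N$ so it suffices to take $N-1=2$, and then uses $P^2+2(1-P)P=2P-P^2\leq 2P$ to get $d(T_h)\leq 2E_0[T_h^{(1)}]=\frac{4(e^{-h}+h-1)}{\mu^2}$. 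Replacing your first-order-statistic bound with this second-order-statistic computation repairs the argument and produces the stated constant without any extreme-value asymptotics.
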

\begin{proof}
We first provide a lower bound on ARL to false alarm that is valid for any attacker's strategy. From the attacker's perspective, to reduce ARL to false alarm of the proposed scheme, it should make the compromised sensor to raise an alarm as soon as possible. The proposed scheme will then raise a false alarm when one of the remaining $N-1$ honest sensors raises an alarm under the probability measure $P_{\infty, N, \mathfrak{s} }$. Since all sensors are identical, for the purpose of performance evaluation, we can simply assume that sensor $N$ is compromised. Let $T_{(1),N-1}$ be the first order statistics of $T_h^{(1)},\cdots,T_h^{(N-1)}$, namely
$T_{(1),N-1}\triangleq \min\left\{T_h^{(1)},\cdots,T_h^{(N-1)}\right\}$.
From the discussion above, we have $\text{ARL}(T_h)\geq E_{\infty,N,\mathfrak{s}}\{T_{(1),N-1}\}$.

In the following, we use an estimate of $E_{\infty;N;\mathfrak{s}}\{T_{(1),N-1}\}$ due to~\cite{Hadjiliadis:TIT:09}. For completeness and in preparation for other estimates, we present the main steps of proof from~\cite{Hadjiliadis:TIT:09} as well.
Since $T_{(1),N-1}$ is a positive random variable, we have
\begin{equation*}
\begin{split}
\text{ARL}(T_h)\geq& E_{\infty,N,\mathfrak{s}}\{T_{(1),N-1}\}\\
                       =&\int_0^{\infty}P_{\infty,N,\mathfrak{s}}(T_{(1),N-1}\geq t)dt\\
                       =&\int_0^{\infty}P_{\infty,N,\mathfrak{s}}\left(\min\left\{T_h^{(1)},\cdots,T_h^{(N-1)}\right\}\geq t\right)dt\\
                       =&\int_0^{\infty}\left[P_{\infty}(T_h^{(1)}\geq t)\right]^{N-1}dt.
\end{split}
\end{equation*}
The last equality is due to the fact that for those sensors not affected by the attacker, $T_h^{(n)}$ are i.i.d. random variables.

Next, we provide an upper-bound on the detection delay that holds for any strategy of the attacker. Clearly, to increase the detection delay of the proposed scheme, the attacker should use a strategy that does not raise an alarm. In this case, the fusion center will raise an alarm only when at least two honest sensors raise alarms under the probability measure $P_{0}$. Again, for the performance evaluation purpose, we can simply assume that sensor $N$ is compromised.


Let $T_{(2),N-1}$ be the second order statistics of $T_h^{(1)},\cdots,T_h^{(N-1)}$. Based on the discussion above, we have
\begin{equation*}
\begin{split}
& d(T_h)\leq E_{0,N,\mathfrak{s}}\{T_{(2),N-1}\}\\
                       =&\int_0^{\infty}P_{0,N,\mathfrak{s}}(T_{(2),N-1}\geq t)dt\\
                       =&\int_0^{\infty}P_{0,N,\mathfrak{s}}\left\{\text{at most 1 of $T_h^{(1)},\cdots,T_h^{(N-1)}$ is less than $t$}\right \}dt\\
                       =&\int_0^{\infty}P_{0,N,\mathfrak{s}}\left\{\text{ none of $T_h^{(1)},\cdots,T_h^{(N-1)}$ is less than $t$}\right \}dt\no\\
                       &+\int_0^{\infty}P_{0,N,\mathfrak{s}}\left\{\text{ 1 of $T_h^{(1)},\cdots,T_h^{(N-1)}$ is less than $t$}\right \}dt\\
                       =&\int_0^{\infty}P_{0,N,\mathfrak{s}}\left(\min\left\{T_h^{(1)},\cdots,T_h^{(N-1)}\right\}\geq t\right)dt
                   +\int_0^{\infty}P_{0,N,\mathfrak{s}}\left\{\text{ 1 of $T_h^{(1)},\cdots,T_h^{(N-1)}$ is less than $t$}\right \}dt\\
                       =&\int_0^{\infty}\left[P_{0}(T_h^{(1)}\geq t)\right]^{N-1}dt
                       +\int_0^{\infty}(N-1)(1-P_{0}(T_h^{(1)}\geq t))\left[P_{0}(T_h^{(1)}\geq t)\right]^{N-2}dt\label{eq:N-1},
                   \end{split}
\end{equation*}
in which the first inequality is due to the non-negativity of $y_t^{(n)}$ which implies the worst case delay will occur when $y_t^{(n)}, n=1,\cdots, N-1$ are 0 at the time of the change ($\tau=0$ satisfies the condition).

To proceed, let us recall the following result from~\cite{Ismail:JAP:04}:
\begin{eqnarray}
P_{0}(T_{h}^{(n)}\geq t)&=&2e^{\frac{h}{2}}\sum\limits_{k\geq 1} u(\phi_{k})e^{-\frac{\mu^2t}{8\cos^2(\phi_k)}},\label{eq:p0}\\
P_{\infty}(T_{h}^{(n)}\geq t)&=&2e^{-\frac{h}{2}}\sum\limits_{k\geq 1} u(\theta_{k})e^{-\frac{\mu^2t}{8\cos^2(\theta_k)}}+2e^{-\frac{h}{2}}v(\eta)e^{-\frac{\mu^2t}{8\cosh^2(\eta)}},\label{eq:p1}
\end{eqnarray}
where
\begin{equation*}
u(x)=\frac{\sin^3x}{x-\sin x\cos x}, \quad v(x)=\frac{\sinh^3x}{\sinh x\cosh x-x},
\end{equation*}
and the constants $\phi_k$, $ \theta_k$, and $\eta$ are defined as the solutions of
\begin{equation*}
\tan \phi_k=-\frac{2\phi_k}{h}<0,\quad \tan \theta_k=\frac{2\theta_k}{h}>0, \quad \text{and}\; \tanh \eta=\frac{2\eta}{h}>0,
\end{equation*}
respectively. With these results, we continue our estimations.
\begin{equation}\label{eq:RLS-est}
\begin{split}
\text{ARL}(T_h)\geq&\int_0^{\infty}\left[P_{\infty}(T_h^{(1)}\geq t)\right]^{N-1}dt\\
                       =&\int_0^{\infty}\left[2e^{-\frac{h}{2}}\sum\limits_{k\geq 1} u(\theta_{k})e^{-\frac{\mu^2t}{8\cos^2(\theta_k)}}+2e^{-\frac{h}{2}}v(\eta)e^{-\frac{\mu^2t}{8\cosh^2(\eta)}}\right]^{N-1}dt\\
                       \overset{(a)}=&\frac{2}{(N-1)\mu^2}[e^{h}+\text{``lower exponents''}],
 \end{split}
\end{equation}
in which (a) in \eqref{eq:RLS-est}  is true due to (80) of~\cite{Hadjiliadis:TIT:09}.

As for $E_{0,N,\mathfrak{s}}\{T_{(2),N-1}\}$, it is obvious that $P_{0,N,\mathfrak{s}}(T_{(2),N-1}\geq t)$ is a non-increasing function of $N$. Hence from~\eqref{eq:N-1}, we have
\begin{equation}\label{eq:delay-est}
\begin{split}
d(T_h)\leq& E_{0,N,\mathfrak{s}}\{T_{(2),N-1}\}\\
=&\int_0^{\infty}\left[P_{0}(T_h^{(1)}\geq t)\right]^{N-1}dt\no\\
                       &+\int_0^{\infty}(N-1)(1-P_{0}(T_h^{(1)}\geq t))\left[P_{0}(T_h^{(1)}\geq t)\right]^{N-2}dt\\
                    &\leq  \int_0^{\infty}[P_{0}(T_h^{(1)}\geq t)]^{2}dt
                    + \int_0^{\infty}2(1-P_{0}(T_h^{(1)}\geq t))P_{0}(T_h^{(1)}\geq t)dt\\
                    =& 2\int_0^{\infty}P_{0}(T_h^{(1)}\geq t) dt-\int_0^{\infty} [P_{0}(T_h^{(1)}\geq t)]^2 dt\\
                    =& \frac{4(e^{-h}+h-1)}{\mu^2}-\int_0^{\infty} [P_{0}(T_h^{(1)}\geq t)]^2 dt\\
                    \leq& \frac{4(e^{-h}+h-1)}{\mu^2}.
\end{split}
\end{equation}
It now follows that $d(T_h)$ grows logarithmically with $\text{ARL}(T_h)$.
\end{proof}

\begin{rmk}
The bounds derived in the proof are valid for any strategy of the attacker, who can use an arbitrarily complicated attack strategy and can arbitrarily change its behavior over time.
\end{rmk}
\begin{rmk}
This scheme is amenable to \emph{a fully distributed implementation with very little communication overhead}. In particular, each sensor $n$ can run $T_h^{(n)}$ locally, and send an alarm (one bit) to the fusion center once $y_t^{(n)}$ is larger than the threshold $h$. The fusion center will raise a final alarm after receiving the second alarm from the sensors.
\end{rmk}
\begin{rmk}
The proposed scheme also works for the scenario in which the Brownian motion observed by each sensor has different drift and volatility coefficients. The analysis follows the same steps as above but with more complicated computations.
\end{rmk}
\begin{rmk}
The proposed second-alarm strategy can be modified for the case with more than 1 sensor or the case with an unknown number of sensors might be compromised, as long as we know an upper-bound $N_{max}$ on the maximum number of sensors that might be compromised and $N_{max}<N/2$. In particular, the fusion center can employ $N_{max}+1$-alarm strategy, in which the fusion center will raise the final alarm once it receives $N_{max}+1$ alarms from the sensors. The condition $N_{max}<N/2$ is necessary for the revised strategy, as the revised strategy will never raise an alarm if $N_{max}>N/2$ and the attacker simply make the affected sensors all silent.
\end{rmk}
\subsection{A Group-Wise Scheme}\label{sec:group}
The scheme presented in Section~\ref{sec:dis} can be implemented in a fully distributed manner and the detection delay scales with ARL in a $\log$ order. However, compared with the case in which there is no dishonest sensor or the case in which the identity of the dishonest sensor is known, the pre-log factor of the scheme in Section~\ref{sec:dis} is larger, which implies that the delay increases faster with the ARL constraint. In particular, if there is no dishonest sensor, from~\eqref{eq:honest}, we know the pre-log factor is $2/(N\mu^2)$. If we know the identity of the dishonest sensor, then the problem is reduced to a case with $N-1$ honest sensors. Again from~\eqref{eq:honest}, we know that the pre-log factor is $2/((N-1)\mu^2)$. On the other hand, the pre-log factor of the fully distributed scheme discussed in Section~\ref{sec:dis} is $4/\mu^2$. In this section, we propose a modified scheme that achieves a better slope that decreases with increasing $N$.

In the modified scheme, we evenly divide these $N$ sensors into 3 groups, each with $N/3$ sensors. Here we assume that $N$ can be divided by 3. The scheme can be easily modified if $N$ cannot be divided by 3. We use $\mathcal{G}_i, i=1,2,3$ to denote each group of sensors. For each group $\mathcal{G}_i$, we run the following scheme:
\begin{equation*}
T_h^{\mathcal{G}_i}=\inf\{t\geq 0;y_t^{\mathcal{G}_i}\geq h\},
\end{equation*}
in which
\begin{equation*}
y_t^{\mathcal{G}_i}=u_t^{\mathcal{G}_i}-m_t^{\mathcal{G}_i}
\end{equation*}
with
\begin{equation*}
\begin{split}
u_t^{\mathcal{G}_i}=&\sum_{n\in \mathcal{G}_i}\mu\bar{\xi}_t^{(n)}-\frac{1}{2}\frac{N}{3}\mu^2t, \quad \text{and} \quad m_t^{\mathcal{G}_i}=\inf\limits_{0\leq s\leq t}u_s^{\mathcal{G}_i}.
\end{split}
\end{equation*}

In our scheme, the fusion center will raise an alarm when at least two out of these three groups raise alarms. In other words, $T^G_h$, the time when the fusion center raises an alarm, is $T^{G}_{(2),3}$, the second order statistic of three random variables $T_h^{\mathcal{G}_1},T_h^{\mathcal{G}_2},T_h^{\mathcal{G}_3}$.

Now, we analyze the detection delay $d(T^{G}_h)$ and ARL to false alarm $\text{ARL}(T^{G}_h)$. We have the following result:
\begin{thm}\label{thm:dist-cont}
The detection delay $T^G_h$ scales logarithmically with ARL in the following manner:
\begin{equation*}
d(T^{G}_h)\leq \frac{12}{\mu^2 N}\left(\log (\text{ARL}(T^{G}_h))+\log\frac{N\mu^2}{3}-1+o(1)\right).
\end{equation*}
\end{thm}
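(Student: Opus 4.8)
The plan is to mirror the structure of the proof of Theorem~\ref{thm:con}, but now at the level of groups rather than individual sensors. The key observation is that each group statistic $y_t^{\mathcal{G}_i}$ is exactly a CUSUM statistic driven by the sum of $N/3$ independent Brownian signals, so it behaves like a single CUSUM with an ``amplified'' drift: under $P_0$ the honest group sees drift $(N/3)\mu^2/2 \cdot \text{(effective)}$, and in fact $u_t^{\mathcal{G}_i}$ has drift $(N/3)\mu^2/2$ and volatility $\sqrt{N/3}\,\mu$, so its behavior is that of a CUSUM with signal-to-noise parameter scaled by $N/3$. Concretely, $y_t^{\mathcal{G}_i}$ under $P_0$ (all sensors in $\mathcal{G}_i$ honest) is distributionally a CUSUM statistic with the role of $\mu^2$ replaced by $(N/3)\mu^2$; likewise under $P_\infty$. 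This is what produces the improved pre-log factor $12/(\mu^2 N)$ in place of $4/\mu^2$.

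First I would set up the reduction: since only one sensor is compromised, at most one of the three groups contains the bad sensor, so at least two groups are entirely honest. For the ARL bound, the attacker's best move is to make the compromised group raise its alarm instantly, after which $T^G_h = T^G_{(2),3}$ is the \emph{minimum} of the two honest group alarm times $T_h^{\mathcal{G}_i}, T_h^{\mathcal{G}_j}$ under $P_{\infty}$. For the delay bound, the attacker's best move is to keep the compromised group silent forever, so $T^G_h$ is the \emph{second} order statistic of the two honest group times under $P_0$ (which, being only two variables, is their maximum) — but to get a clean bound I would instead bound $d(T^G_h)$ by $E_0\{T^G_{(2),3}\}$ treating all three as present and use monotonicity, exactly as in \eqref{eq:delay-est}. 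Then I would write $\text{ARL}(T^G_h) \ge \int_0^\infty [P_\infty(T_h^{\mathcal{G}_1}\ge t)]^{2}\,dt$ and $d(T^G_h) \le \int_0^\infty[P_0(T_h^{\mathcal{G}_1}\ge t)]^{2}dt + \int_0^\infty 2(1-P_0(T_h^{\mathcal{G}_1}\ge t))P_0(T_h^{\mathcal{G}_1}\ge t)\,dt = 2\int_0^\infty P_0(T_h^{\mathcal{G}_1}\ge t)\,dt - \int_0^\infty[P_0(T_h^{\mathcal{G}_1}\ge t)]^2\,dt$.

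Next I would compute the two one-group integrals. For $\int_0^\infty P_0(T_h^{\mathcal{G}_1}\ge t)\,dt = E_0[T_h^{\mathcal{G}_1}]$, the standard CUSUM/It\^o computation (as in the continuous-time formulas quoted before Proposition~\ref{prop:degradation}, but with $\mu^2$ replaced by $(N/3)\mu^2$) gives $E_0[T_h^{\mathcal{G}_1}] = \dfrac{2(e^{-h}+h-1)}{(N/3)\mu^2} = \dfrac{6(e^{-h}+h-1)}{N\mu^2}$; dropping the subtracted square gives $d(T^G_h) \le \dfrac{12(e^{-h}+h-1)}{N\mu^2}$. For the ARL side, I need the analogue of the Ismail--Sadek type formulas \eqref{eq:p0}--\eqref{eq:p1} for the group CUSUM; rescaling time by the factor $N/3$ in \eqref{eq:p1} and invoking the identity (80) of~\cite{Hadjiliadis:TIT:09} as in \eqref{eq:RLS-est} yields $\text{ARL}(T^G_h) \ge \int_0^\infty [P_\infty(T_h^{\mathcal{G}_1}\ge t)]^2\,dt = \dfrac{2}{2\cdot(N/3)\mu^2}[e^h + \text{lower exponents}] = \dfrac{3}{N\mu^2}e^h(1+o(1))$. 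Solving for $h$ in terms of $\text{ARL}(T^G_h)$, substituting into the delay bound $\dfrac{12(e^{-h}+h-1)}{N\mu^2}$, and keeping only the dominant $h$ term gives $d(T^G_h) \le \dfrac{12}{\mu^2 N}\bigl(\log\text{ARL}(T^G_h) + \log\frac{N\mu^2}{3} - 1 + o(1)\bigr)$, as claimed.

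The main obstacle is justifying that the group CUSUM $T_h^{\mathcal{G}_i}$ really is, in law under $P_0$ and under $P_\infty$, a single-sensor CUSUM with $\mu^2$ replaced by $(N/3)\mu^2$, so that the closed-form expressions \eqref{eq:p0}--\eqref{eq:p1} and the asymptotic identity (80) of~\cite{Hadjiliadis:TIT:09} apply verbatim after the time-rescaling $t \mapsto (N/3)t$. This requires checking that $\sum_{n\in\mathcal{G}_i}\bar\xi_t^{(n)}$ is $\sqrt{N/3}$ times a standard Brownian motion plus the appropriate drift (drift $0$ before the change, drift $(N/3)\mu t$ after), so that $u_t^{\mathcal{G}_i}$ has the exact form needed — a routine but essential bookkeeping step — and that the order-statistic and monotonicity-in-$N$ arguments from the proof of Theorem~\ref{thm:con} carry over with $3$ groups in place of $N$ sensors. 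Everything else is a rescaled copy of the computations already carried out.
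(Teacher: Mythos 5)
Your proposal is correct and follows essentially the same route as the paper: reduce to the two fully honest groups, observe that each group CUSUM is a single-sensor CUSUM with $\mu^2$ replaced by $(N/3)\mu^2$ (drift $\pm\mu^2N/6$, volatility $\mu\sqrt{N/3}$), bound the ARL below by the minimum of the two honest group times via the Ismail--Sadek/Hadjiliadis formulas, and bound the delay above by the second order statistic, yielding $\text{ARL}\ge\frac{3}{N\mu^2}e^h(1+o(1))$ and $d\le\frac{12}{N\mu^2}(e^{-h}+h-1)$. The only cosmetic difference is that the paper bounds $E\{T^{G}_{(2),2}\}\le 2E\{T_h^{\mathcal{G}_1}\}$ directly rather than through the $2\int P_0-\int P_0^2$ decomposition, which gives the identical estimate.
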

\begin{proof}
The proof follows that of Theorem~\ref{thm:con} with only minor modification. In particular, in the group-wise scheme,
$u_t^{\mathcal{G}_i}$ has a drift of $-\mu^2 N/6$ before the change, $\mu^2 N/6$ after the change, and its standard deviation is $\mu\sqrt{N/3}$.
Following the steps in the proof of Theorem~\ref{thm:con}, we obtain the estimate
\begin{equation*}
\text{ARL}(T^{G}_h)\geq E_{\infty,N,\mathfrak{s}}\{T^{G}_{(1),2}\}=\frac{3}{\mu^2 N}\left[e^h+\text{``lower exponents''}\right].
\end{equation*}
Similarly,
\begin{equation*}
\begin{split}
d(T^{G}_h)\leq E_{0,N,\mathfrak{s}}\{T^{G}_{(2),2}\} \leq  2E_{0,N,\mathfrak{s}}\{T_{h}^{\mathcal{G}_1}\}
=\frac{2}{\mu^2N/6}(e^{-h}+h-1).
\end{split}
\end{equation*}
The statement of the theorem easily follows from the above two estimates.
\end{proof}
\begin{rmk}
This modified scheme achieves a smaller detection delay than the one discussed in Section~\ref{sec:dis}. However, this modified scheme is a centralized scheme as the fusion center needs signals from all sensor in each group.
\end{rmk}
\begin{rmk}
One may wonder whether we can further divide the sensors into two groups instead of three. There are some potential issues with dividing the sensors into two groups. If one employs the the second-alarm scheme, the attacker can make the detection delay arbitrarily long by generating a Brownian motion with a sufficiently small negative drift. In this way, the combined signal in the group containing the attacker will not raise an alarm (or will raise an alarm very late) even if there is a true change in the environment. If, instead of using the second-alarm scheme, we opt to use the usual CUSUM scheme of Section~\ref{sec:CUSUM}, then the attacker can make ARL to false alarm very short by generating a Brownian motion with a sufficiently large positive drift.
\end{rmk}

\section{Discrete-time Model}\label{sec:Discrete}
In this section, we discuss the discrete-time model for quickest change detection with compromised sensors. We follow the same structure as in the Brownian motion model. In this section, to follow conventional notation, we will use $k$ to denote time $t$.

\subsection{The CUSUM Test}
Similar to the Brownian motion case, when all nodes are honest, the non-Bayesian quickest change detection problem under Lorden's performance index has been well understood. The optimal solution is Page's CUSUM stopping rule~\cite{Moustakides:AoS:86}. In particular, define
\begin{eqnarray}\label{eq:cusumstat}
y_{k}=u_{k}-m_{k},
\end{eqnarray}
with
\begin{equation*}
\begin{split}
u_{k}=&\sum_{l=1}^k\sum_{n=1}^N\log\left\{\frac{ f_1(\xi_l^{(n)})}{f_0(\xi_l^{(n)})}\right\}\triangleq \sum_{l=1}^k\sum_{n=1}^N Z_l^{(n)},\\
m_{k}=&\min\limits_{1\leq j\leq k}u_{j}.
\end{split}
\end{equation*}
Here, $k$ is the time index. We assume that the second moment of $Z^{(n)}$s for $n=1,\cdots,N$ under $f_1$ are finite. The CUSUM stopping rule is
\begin{eqnarray}\label{eq:cusumd}
T_{\nu}=\inf\{k\geq 1;y_{k}\geq \nu\}
\end{eqnarray}
where $\nu$ is a threshold such that
\begin{equation*}
E_{\infty,0,\phi}[T_{\nu}]=\gamma.
\end{equation*}

The performance evaluation of the discrete-time CUSUM is more involved than that of the Brownian motion model, mainly because of the possibility of overshoot. Define
\begin{eqnarray}
\kappa&=&\lim\limits_{\nu\rightarrow\infty}E_{0,0,\phi}\{y_{T_{\nu}}-\nu\},\label{eq:kappa}\\
\beta&=&E_{0,0,\phi}\{m_{\infty}\},\label{eq:beta}\\
R&=&\lim\limits_{\nu\rightarrow\infty}E_{0,0,\phi}\{\exp[-u_{\eta_{\nu}}-\nu]\},\label{eq:r}
\end{eqnarray}
with
\begin{equation*}
\eta_{\nu}=\inf\{k:u_{k}\geq \nu\}.
\end{equation*}

From~\cite{Tartakovsky:CDC:05} one has
\begin{equation*}
\text{ARL}(T_{\nu})=E_{\infty,0,\phi}[T_{\nu}]=\frac{1}{R^2ND(f_1||f_0)}e^{\nu}[1+o(1)],
\end{equation*}
in which $D(f_1||f_0)$ is the Kullback-Leibler distance between $f_1$ and $f_0$.
In addition, from~\cite{Tartakovsky:CDC:05}, one has
\begin{equation*}
d(T_{\nu})=E_{0,0,\phi}[T_{\nu}]=\frac{1}{ND(f_1||f_0)}(\nu+\beta+\kappa)+o(1).
\end{equation*}




\subsection{A Fully Distributed Byzantine Attack Resistant Change Detection Scheme}

Here, we show that a scheme modified from the one discussed in Section~\ref{sec:dis} enables us to recover the logarithmic scaling between the detection delay and average run length. In this scheme, the fusion center first runs $N$ independent CUSUM $T_h^{(n)}$s, one for each signal $\bar{\xi}_k^{(n)}$ received from sensor $n$:
\begin{equation*}
T_h^{(n)}=\inf\{k\geq 1;y_k^{(n)}\geq h\},
\end{equation*}
in which
\begin{eqnarray}\label{eq:cusumstatd}
y_{k}^{(n)}=u_{k}^{(n)}-m_{k}^{(n)},
\end{eqnarray}
with
\begin{equation*}
\begin{split}
u_{k}^{(n)}=&\sum_{l=1}^k\log\left\{\frac{ f_1(\bar{\xi}_l^{(n)})}{f_0(\bar{\xi}_l^{(n)})}\right\}\triangleq \sum_{l=1}^k Z_l^{(n)},\\
m_{k}^{(n)}=&\min\limits_{1\leq j\leq k}u_{j}^{(n)}.
\end{split}
\end{equation*}


Similar to the Brownian motion model, the fusion center will raise the final alarm when two alarms are raised from these $N$ CUSUMs. That is, $T_h$, the time the fusion center raises the final alarm, is the same as $T_{(2), N}$, the second order statistics of $N$ random variables $T_h^{(1)},\cdots,T_h^{(N)}$.

Now, we analyze the detection delay $d(T_h)$ and ARL to false alarm, which we denote by $\text{ARL}(T_h)$.
\begin{thm}\label{thm:dis}
The detection delay of $T_h$ scales logarithmically with ARL :
\begin{equation*}
d(T_h)\leq \frac{2}{D(f_1||f_0)}\left(\log\text{ARL}(T_h)+\log\frac{(1-\exp(1-N))R_1^2D(f_1||f_0)}{\exp(1-N)}+\beta_1+\kappa_1\right)+o(1).
\end{equation*}
\end{thm}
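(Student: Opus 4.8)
## Proof Proposal for Theorem~\ref{thm:dis}

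The plan is to mirror the structure of the proof of Theorem~\ref{thm:con}, replacing the Brownian-motion exact distributions for $P_0(T_h^{(n)} \geq t)$ and $P_\infty(T_h^{(n)} \geq t)$ with the asymptotic (as $h \to \infty$) expressions from the discrete-time literature. First I would derive a lower bound on $\text{ARL}(T_h)$ that holds uniformly over all attack strategies $\mathfrak{s}$. As in the continuous case, the attacker's best move to shrink the ARL is to force its compromised sensor to raise an alarm immediately, so that a false alarm is triggered as soon as one of the $N-1$ honest sensors' local CUSUMs fires; hence $\text{ARL}(T_h) \geq E_{\infty,N,\mathfrak{s}}\{T_{(1),N-1}\}$ where $T_{(1),N-1} = \min\{T_h^{(1)},\dots,T_h^{(N-1)}\}$. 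Using independence of the honest sensors, this equals $\sum_{k\geq 1} \big[P_\infty(T_h^{(1)} \geq k)\big]^{N-1}$ (or the integral analogue if one passes to a continuous approximation). I would then invoke the known exponential decay rate of $P_\infty(T_h^{(1)} \geq k)$ — governed by the single-sensor false-alarm tail, whose leading behavior involves $R_1$ and $D(f_1 \| f_0)$ — to conclude $\text{ARL}(T_h) \gtrsim \frac{\exp(1-N)}{(1-\exp(1-N))\, R_1^2 D(f_1\|f_0)}\, e^h\,[1+o(1)]$, the factor $\exp(1-N)$ emerging from raising the single-sensor tail (which carries an $e^{-h/2}$-type prefactor or, in the discrete normalization here, an $R_1$-dependent constant) to the $(N-1)$ power and summing the resulting geometric-like series; this is exactly the step that fixes the constant inside the logarithm in the theorem statement.

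Next I would bound the detection delay. The attacker's worst strategy for inflating delay is to keep its sensor silent, so that the fusion center waits for the \emph{second} honest alarm: $d(T_h) \leq E_{0,N,\mathfrak{s}}\{T_{(2),N-1}\}$, with the worst case occurring when all honest local statistics $y_k^{(n)}$ equal $0$ at the change time (so $\tau=0$ is the extremal configuration, using non-negativity of the CUSUM statistic). As in Theorem~\ref{thm:con}, I would split $P_0(T_{(2),N-1}\geq k)$ into the ``none below $k$'' and ``exactly one below $k$'' events, write the sum as $\sum_k [P_0(T_h^{(1)}\geq k)]^{N-1} + \sum_k (N-1)(1-P_0(T_h^{(1)}\geq k))[P_0(T_h^{(1)}\geq k)]^{N-2}$, and then dominate it by the $N=3$ value (monotonicity in $N$ of the survival function of the second order statistic), giving $d(T_h) \leq 2\sum_k P_0(T_h^{(1)}\geq k) - \sum_k [P_0(T_h^{(1)}\geq k)]^2 \leq 2\,E_0\{T_h^{(1)}\}$. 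Finally I would substitute the single-sensor discrete-CUSUM delay asymptotic $E_{0,0,\phi}\{T_h^{(1)}\} = \frac{1}{D(f_1\|f_0)}(h + \beta_1 + \kappa_1) + o(1)$, yielding $d(T_h) \leq \frac{2}{D(f_1\|f_0)}(h + \beta_1 + \kappa_1) + o(1)$. Combining with the ARL lower bound — solving it for $h$ in terms of $\log \text{ARL}(T_h)$ — produces the stated inequality.

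The main obstacle is making the ARL lower bound rigorous in discrete time, where there is no clean closed form analogous to equations~\eqref{eq:p0}--\eqref{eq:p1}: one only has the first-order asymptotics for $E_{\infty,0,\phi}[T_\nu]$ and $E_{0,0,\phi}[T_\nu]$ from~\cite{Tartakovsky:CDC:05}, not the full tail of $P_\infty(T_h^{(1)} \geq k)$ term by term. To push the order-statistics argument through I would need a controlled approximation $P_\infty(T_h^{(1)} \geq k) \approx (1 - c\, e^{-h} e^{-k/\text{ARL}_1})$-type expansion, i.e. an exponential-decay bound for the single-sensor false alarm tail with the rate and prefactor made explicit up to $1+o(1)$; this is where I expect to lean hardest on renewal-theoretic estimates (the constants $R_1$, $\kappa_1$, $\beta_1$ are precisely what such a sharp analysis produces). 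The analogous subtlety for the delay side is milder because there the crude bound $E_{0,N,\mathfrak{s}}\{T_{(2),N-1}\} \leq 2 E_0\{T_h^{(1)}\}$ only uses monotonicity and needs no fine tail control. A secondary check is verifying that the essential-supremum-over-$\mathcal{F}_\tau$ in the definition of $d(T)$ is indeed maximized at the all-zero initial configuration, which follows as in the continuous case from the pathwise monotonicity of each $y_k^{(n)}$ in its initial value.
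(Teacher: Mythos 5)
Your proposal follows essentially the same route as the paper's proof: the identical attacker reductions (immediate alarm for the ARL bound, silence for the delay bound), the same order-statistics decomposition dominated by the $N-1=2$ case giving $d(T_h)\leq 2E_0\{T_h^{(1)}\}$, and the same resolution of the discrete-time tail issue---the paper fills the gap you flag by citing the asymptotic exponentiality of $T_h^{(1)}/C$ under $P_\infty$ (with $C=\frac{1}{R_1^2D(f_1\|f_0)}e^h[1+o(1)]$, due to Khan), which yields exactly the geometric-series constant $\exp(1-N)/(1-\exp(1-N))$ you predicted. No substantive differences.
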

\begin{proof} The proof follows a structure similar to the Brownian motion case.
We first provide a lower bound for ARL  to false alarm  for any strategy the attacker might use.
From the attacker's perspective, to reduce the ARL to false alarm of the proposed scheme, it should make the compromised sensor to raise an alarm as soon as possible. The proposed scheme will then raise a false alarm when one of the remaining $N-1$ sensors raises an alarm under the probability measure $P_{\infty}$. Since all sensors are identical, for the evaluation of the performance purpose, we can simply assume sensor $N$ is compromised. Again, let $T_{(1),N-1}\triangleq \min\left\{T_h^{(1)},\cdots,T_h^{(N-1)}\right\}$ be the first order statistics of $T_h^{(1)},\cdots,T_h^{(N-1)}$.
From the discussion above, we have
$\text{ARL}(T_h)\geq E_{\infty,N,\mathfrak{s}}\{T_{(1), N-1}\}$.
Following the same steps as those in the Brownian motion case, we have

\begin{equation}\label{eq:barhd}
\begin{split}
\text{ARL}(T_h)\geq E_{\infty,N,\mathfrak{s}}\{T_{(1), N-1}\} =\sum\limits_{k=1}^{\infty}\left[P_{\infty}(T_h^{(1)}\geq k)\right]^{N-1}.
\end{split}
\end{equation}

Now, we provide an upper-bound for the detection delay under any attack. Clearly, to increase the detection delay of the proposed scheme, the attacker should use a strategy that does not raise an alarm. In this case, the fusion center will raise an alarm only when at least two honest sensors raise alarms under the probability measure $P_{0}$. Since all sensors are identical, for the evaluation of the performance purpose, we can simply assume sensor $N$ is compromised.

Let $T_{(2),N-1}$ be the second order statistics of $T_h^{(1)},\cdots,T_h^{(N-1)}$. Based on the discussion above, and follow the same steps as those in the Brownian motion case, we have

\begin{equation*}
\begin{split}
d(T_h)\leq& E_{0,N,\mathfrak{s}}\{T_{(2),N-1}\}\\
                       =&\sum\limits_{k=1}^{\infty}P_{0,N,\mathfrak{s}}(T_{(2),N-1}\geq k)\\
                       =&\sum\limits_{k=1}^{\infty}\left[P_{0}(T_h^{(1)}\geq k)\right]^{N-1}
                       +\sum\limits_{k=1}^{\infty}(N-1)(1-P_{0}(T_h^{(1)}\geq k))\left[P_{0}(T_h^{(1)}\geq k)\right]^{N-2}.\label{eq:N-1d}
                    \end{split}
                       \end{equation*}

Unlike the Brownian motion case, closed form expressions for $P_{0}(T_h^{(1)}\geq k)$ and $P_{\infty}(T_h^{(1)}\geq k)$ are unknown. However, it has been shown in~\cite{Khan:SA:95} that a properly normalized $T_h^{(1)}$ has a geometric distribution under $P_{\infty}$. In particular, letting
\begin{equation*}
C=\frac{1}{R_1^2D(f_1||f_0)}e^{h}[1+o(1)],
\end{equation*}
we have
\begin{equation*}
\lim\limits_{h\rightarrow \infty} P_{\infty}\left\{\frac{T_h^{(1)}}{C}\geq k\right\}=\exp(-k).
\end{equation*}
Here, $R_1$ is the quantity corresponds to~\eqref{eq:r} when there is only one observation sequence.

We continue the computation.
\begin{equation*}
T_{(1), N-1}=\min\left\{T_h^{(1)},\cdots,T_h^{(N-1)}\right\}=C\min\left\{T_h^{(1)}/C,\cdots,T_h^{(N-1)}/C\right\}.
\end{equation*}

Following~\eqref{eq:barhd}, we have
\begin{equation*}
\begin{split}
\text{ARL}(T_h)\geq& E_{\infty,N,\mathfrak{s}}\{T_{(1), N-1}\}\\
=&C\sum\limits_{k=1}^{\infty}\left[P_{\infty}(T_h^{(1)}/C\geq k)\right]^{N-1}\\
                       =&C\sum\limits_{k=1}^{\infty}\exp(-(N-1)k)\\
                       =&\frac{\exp(1-N)}{1-\exp(1-N)}C\\
                       =&\frac{\exp(1-N)}{(1-\exp(1-N))R_1^2D(f_1||f_0)}e^{h}[1+o(1)].
 \end{split}
                       \end{equation*}

Now, for $d(T_h)$, as the Brownian motion case, $P_{0}(T_{(2),N-1}\geq k)$ is a non-increasing function of $N$. Hence from~\eqref{eq:N-1d}, we have
\begin{equation*}
\begin{split}d(T_h)\leq& E_{0,N,\mathfrak{s}}\{T_{(2),N-1}\}\\
      =&\sum\limits_{k=1}^{\infty}\left[P_{0}(T_h^{(1)}\geq k)\right]^{N-1}
      +\sum\limits_{k=1}^{\infty}(N-1)(1-P_{0}(T_h^{(1)}\geq k))\left[P_{0}(T_h^{(1)}\geq k)\right]^{N-2}\\
                    \leq &\sum\limits_{k=1}^{\infty}\left[P_{0}(T_h^{(1)}\geq k)\right]^2+ \sum\limits_{k=1}^{\infty}2(1-P_{0}(T_h^{(1)}\geq k))P_{0}(T_h^{(1)}\geq k)\no\\
                    =& \sum\limits_{k=1}^{\infty}2P_{0}(T_h^{(1)}\geq k) -\sum\limits_{k=1}^{\infty} P_{0}^2(T_h^{(1)}\geq k)\no\\
                    =& \frac{2}{D(f_1||f_0)}(h+\beta_1+\kappa_1)+o(1)-\sum\limits_{k=1}^{\infty} P_{0}^2(T_h^{(1)}\geq k)\no\\
                    \leq& \frac{2}{D(f_1||f_0)}(h+\beta_1+\kappa_1)+o(1),
 \end{split}
                       \end{equation*}
in which $\kappa_1$ and $\beta_1$ are the quantities corresponding to~\eqref{eq:kappa} and~\eqref{eq:beta} when there is only one observation sequence.

As a result, we see that $d\{T_h\}$ grows logarithmically with $\text{ARL}\{T_h\}$.
\end{proof}
\subsection{Group-Wise Scheme}
Similar to the Brownian motion case, we can design a group-wise scheme that has a smaller detection delay. In particular, we evenly divide these $N$ sensors into 3 groups, each with $N/3$ sensors. For each group $\mathcal{G}_i$, we run the following scheme:
\begin{equation*}
T_h^{\mathcal{G}_i}=\inf\{k\geq 1;y_k^{\mathcal{G}_i}\geq h\},
\end{equation*}
in which
\begin{equation*}
y_k^{\mathcal{G}_i}=u_k^{\mathcal{G}_i}-m_k^{\mathcal{G}_i}
\end{equation*}
with
\begin{equation*}
\begin{split}
u_k^{\mathcal{G}_i}=\sum_{l=1}^k \sum_{n\in \mathcal{G}_i}\log\left\{\frac{ f_1(\xi_l^{(n)})}{f_0(\xi_l^{(n)})}\right\}, \quad m_k^{\mathcal{G}_i}=\min\limits_{1\leq j\leq k}u_j^{\mathcal{G}_i}.
\end{split}
\end{equation*}

In our scheme, the fusion center will raise an alarm when at least two out of these three groups raise alarms. In other words, $T^G_h$, the time when the fusion center raises an alarm, is $T^{G}_{(2),3}$, the second order statistic of three random variables $T_h^{\mathcal{G}_1},T_h^{\mathcal{G}_2},T_h^{\mathcal{G}_3}$. Regarding this group-wise scheme, we have the following result.
\begin{thm}\label{thm:dist-disc}
For the group-wise scheme, the detection delay scales with ARL to false alarm in the following manner:
\begin{equation*}
\begin{split}
d(T^G_h)\leq\frac{2}{D(f_1||f_0)N/3}\left(\log\text{ARL}(T^G_h)+\log\frac{(1-\exp(-1))R_G^2D(f_1||f_0)N/3}{\exp(-1)}+\beta_G+\kappa_G\right)+o(1),
\end{split}
\end{equation*}
in which $\beta_G$, $\kappa_G$ and $R_G$ are quantities corresponding to~\eqref{eq:kappa},~\eqref{eq:beta} and~\eqref{eq:r} when there are $N/3$ sequences.
\end{thm}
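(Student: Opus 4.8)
The plan is to mirror the proof of Theorem~\ref{thm:dis} exactly, since the group-wise discrete-time scheme is just the $N$-sensor second-alarm scheme with each ``sensor'' replaced by a group of $N/3$ honest sensors and with the total number of such groups reduced from $N$ to $3$. First I would record the key structural fact: of the three groups, at most one contains the compromised sensor (we may assume it is $\mathcal{G}_3$). For the ARL bound, the attacker's best move is to make $T_h^{\mathcal{G}_3}$ fire immediately, after which a false alarm requires a second alarm among the two honest groups $\mathcal{G}_1,\mathcal{G}_2$, so $\text{ARL}(T^G_h)\geq E_{\infty,N,\mathfrak{s}}\{T^G_{(1),2}\}$ where $T^G_{(1),2}=\min\{T_h^{\mathcal{G}_1},T_h^{\mathcal{G}_2}\}$. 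Since each $u_k^{\mathcal{G}_i}$ is a sum over $N/3$ i.i.d.\ honest sequences, the single-group CUSUM $T_h^{\mathcal{G}_i}$ is exactly a CUSUM with $N/3$ observation sequences, so the Khan asymptotic geometric law from~\cite{Khan:SA:95} applies with normalizing constant $C_G=\frac{1}{R_G^2 D(f_1\|f_0)(N/3)}e^h[1+o(1)]$ and $\lim_{h\to\infty}P_\infty\{T_h^{\mathcal{G}_1}/C_G\geq k\}=\exp(-k)$. Summing the geometric series with exponent $2$ (two honest groups) instead of $N-1$ gives
\[
\text{ARL}(T^G_h)\geq C_G\sum_{k\geq 1}\exp(-2k)=\frac{\exp(-2)}{(1-\exp(-2))R_G^2 D(f_1\|f_0)(N/3)}e^h[1+o(1)].
\]

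Next I would bound the detection delay. When the attacker stays silent, the final alarm needs two of the three groups to fire, and since $P_{0,N,\mathfrak{s}}(T^G_{(2),2}\geq k)$ is dominated by the two-honest-group case, $d(T^G_h)\leq E_{0,N,\mathfrak{s}}\{T^G_{(2),2}\}$. As in the proof of Theorem~\ref{thm:dis} one writes $P_0(T^G_{(2),2}\geq k)$ as the probability that none of the two honest groups fired plus the probability that exactly one did, bounds this by $2P_0(T_h^{\mathcal{G}_1}\geq k)-P_0(T_h^{\mathcal{G}_1}\geq k)^2\leq 2P_0(T_h^{\mathcal{G}_1}\geq k)$, and sums over $k$. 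The sum $\sum_k P_0(T_h^{\mathcal{G}_1}\geq k)=E_0\{T_h^{\mathcal{G}_1}\}$ is the expected detection delay of a CUSUM over $N/3$ sequences, which by~\cite{Tartakovsky:CDC:05} equals $\frac{1}{D(f_1\|f_0)(N/3)}(h+\beta_G+\kappa_G)+o(1)$. Hence $d(T^G_h)\leq\frac{2}{D(f_1\|f_0)(N/3)}(h+\beta_G+\kappa_G)+o(1)$.

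Finally I would combine the two estimates: the ARL lower bound gives $h\leq\log\text{ARL}(T^G_h)+\log\frac{(1-\exp(-1))R_G^2 D(f_1\|f_0)(N/3)}{\exp(-1)}+o(1)$ --- here I note that the $\exp(\pm 2)$ in the geometric sum is rewritten, following the convention used in the statement, as $\exp(-1)$ absorbed into the normalization of the ``$N/3$-sequence'' CUSUM exactly as $\exp(1-N)$ was used for the single-sequence case with $N-1$ honest sensors, so the exponent bookkeeping must be checked against the form in Theorem~\ref{thm:dis} --- and substituting into the delay bound yields the claimed inequality. The only genuinely delicate point, and the one I expect to be the main obstacle, is verifying that the ``CUSUM over $N/3$ i.i.d.\ sequences'' really does satisfy the hypotheses of the cited asymptotic results of~\cite{Khan:SA:95,Tartakovsky:CDC:05} --- i.e.\ that grouping preserves the i.i.d.\ increment structure and the finite-second-moment condition with the per-step log-likelihood ratio now being $\sum_{n\in\mathcal{G}_i}Z^{(n)}$ and the per-step KL divergence being $(N/3)D(f_1\|f_0)$ --- and that the constants $\beta_G,\kappa_G,R_G$ are precisely the analogues of $\beta,\kappa,R$ for that grouped statistic; once this identification is in place the rest is a routine repetition of the Theorem~\ref{thm:dis} computation with $N-1\rightsquigarrow 2$ and single sequence $\rightsquigarrow$ $N/3$ sequences.
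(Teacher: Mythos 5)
Your proposal follows the paper's proof essentially verbatim: lower-bound the ARL by the expected minimum of the two fully honest groups' alarm times via the geometric limit law of \cite{Khan:SA:95} with normalization $C_G$, upper-bound the delay by the expected second order statistic of the two honest groups, which is at most $2E_{0}\{T_h^{\mathcal{G}_1}\}=\frac{2}{D(f_1\|f_0)N/3}(h+\beta_G+\kappa_G)+o(1)$, and then eliminate $h$. The one point you flag --- whether the geometric sum carries exponent $2$ or $1$ --- is worth settling: with two honest groups the sum is $C_G\sum_{k\geq 1}e^{-2k}=\frac{e^{-2}}{1-e^{-2}}C_G$, exactly as you compute, whereas the paper's displayed ARL estimate (and hence the constant $\frac{1-e^{-1}}{e^{-1}}$ appearing in the theorem statement) uses exponent $1$. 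Your bookkeeping is the one consistent with $T^{G}_{(1),2}$ being the minimum over two groups and with the continuous-time analogue in Theorem~\ref{thm:dist-cont}, where the factor $\frac{3}{N\mu^2}=\frac{2}{2\cdot N\mu^2/3}$ correctly reflects two honest groups. The discrepancy only shifts the additive constant inside the parenthesis from $\log\frac{(1-e^{-2})R_G^2D(f_1\|f_0)N/3}{e^{-2}}$ to the stated $\log\frac{(1-e^{-1})R_G^2D(f_1\|f_0)N/3}{e^{-1}}$ and does not affect the logarithmic scaling, but note that your (correct) derivation actually yields a slightly larger constant than the theorem as written claims.
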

\begin{proof}
The proof follows closely with that of Theorem~\ref{thm:dis}, with the difference that when all sensors in $G_1$ are honest, for $T_h^{\mathcal{G}_1}$, we have
\begin{eqnarray}
d(T_h^{\mathcal{G}_1})=\frac{1}{D(f_1||f_0)N/3}(h+\beta_G+\kappa_G)+o(1),
\end{eqnarray}
and $T_h^{\mathcal{G}_1}/C_G$ is geometrically distributed under $P_{\infty,0,\phi}$, in which
\begin{eqnarray}
C_G=\frac{1}{R_G^2D(f_1||f_0)N/3}e^{h}[1+o(1)],
\end{eqnarray}
and
\begin{eqnarray}
\lim\limits_{h\rightarrow \infty} P_{\infty,0,\phi}\left\{\frac{T_h^{\mathcal{G}_1}}{C_G}\geq k\right\}=\exp(-k).
\end{eqnarray}

Following the same steps in the proof of Theorem~\ref{thm:dis}, we obtain
\begin{eqnarray}
\text{ARL}(T^G_h)\geq \frac{\exp(-1)}{(1-\exp(-1))R_G^2D(f_1||f_0)N/3}e^{h}[1+o(1)].
\end{eqnarray}
and
\begin{eqnarray}
d(T^G_h)\leq\frac{2}{D(f_1||f_0)N/3}(h+\beta_G+\kappa_G)+o(1).
\end{eqnarray}
\end{proof}

\section{Numerical Examples}\label{sec:num}

In this section, we present several examples to illustrate the analytical results presented in this paper. In generating these figures, we consider the worst case scenario, in which we give the attacker additional information and allow the attacker to change its attack strategy. In particular, when we run the simulation to evaluate ARL, we have the attacher make the affected sensor raise an alarm immediately. When we run the simulation to evaluate delay, we have the attacker keep the affected sensor silent. Hence, the curves we presented here are the worst case curves.

\begin{figure}[thb]
\centering
\includegraphics[width=0.6 \textwidth]{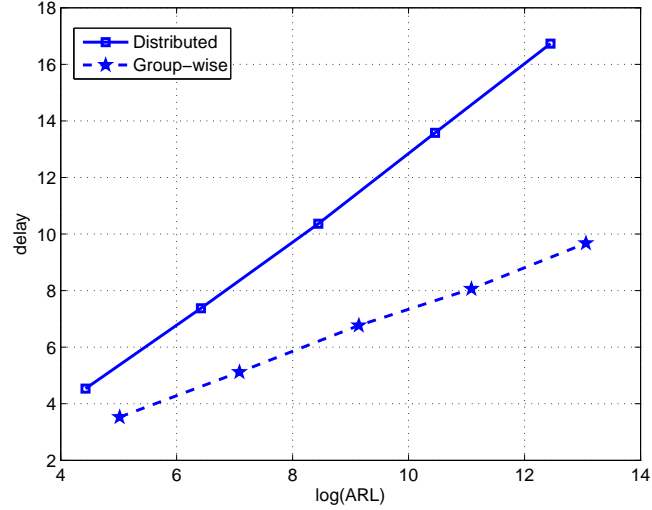}
\caption{Detection delay vs ARL for the continuous-time Brownian motion case}
\label{fig:continuous}
\end{figure}
Figure~\ref{fig:continuous} illustrates the relationship between the detection delay and ARL for the continuous-time Brownian motion case. In generating this figure, we set the number of sensors $N$ to be 9, the drift after the change $\mu$ to be $1$. From the figure, one can see that the detection delay scales logarithmically with ARL for both distributed and group-wise schemes. Furthermore, the delay and its slope of the group-wise scheme are smaller than those of the distributed scheme. This illustrates the benefits of the group-wise scheme. However, as discussed in the paper, the communication overhead of the group-wise scheme is significantly higher than that of the fully distributed scheme.

\begin{figure}[thb]
\centering
\includegraphics[width=0.6 \textwidth]{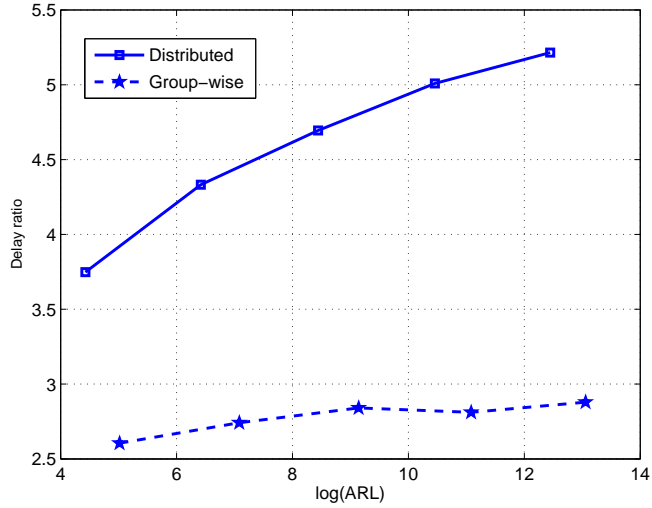}
\caption{Detection delay ratio for the continuous-time Brownian motion case}
\label{fig:continuous}
\end{figure}
Figure~\ref{fig:continuous} illustrates the ratio between the detection delay of the proposed scheme and that of the case with $N-1=8$ honest sensor for the continuous-time Brownian motion case. Based on asymptotic results discussed in Section~\ref{sec:Brownian}, this ratio should not be larger than
$$\frac{4/\mu^2}{2/[(N-1)\mu^2]}=2(N-1)=16$$
for the distributed scheme, and not larger than
$$\frac{12/(N\mu^2)}{2/[(N-1)\mu^2]}=6\frac{N-1}{N}=16/3$$
for the group-wise scheme. From the figure, we can see that the ratios are indeed smaller than these numbers. Furthermore, compared with the upper-bound obtained in this paper, we can see from the figure that the actual performance of the proposed schemes is even better in practice.

\begin{figure}[thb]
\centering
\includegraphics[width=0.6 \textwidth]{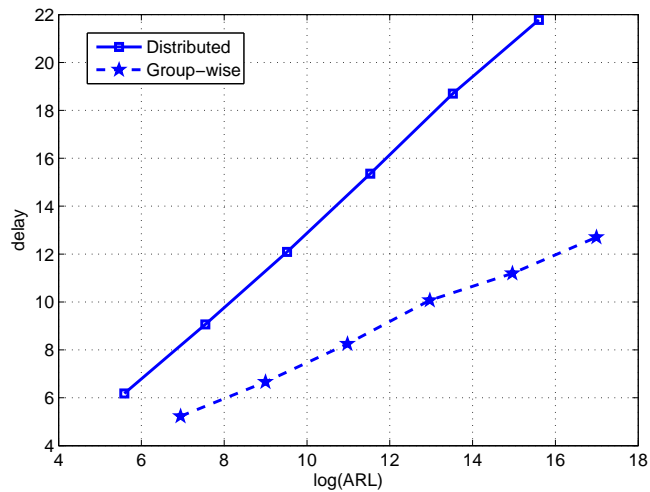}
\caption{Detection delay vs ARL for the discrete time case}
\label{fig:dis}
\end{figure}

Figure~\ref{fig:dis} illustrates the relationship between the detection delay and ARL for the discrete-time case. In generating this figure, we set the pre-change pdf $f_0$ to be Gaussian with mean $0$ and variance $1$, and set the after-change pdf $f_1$ to be Gaussian with mean $1$ and variance $1$. Same as the continuous-time case, we set the number of sensors $N$ to be 9. From the figure, one can again see that the detection delay scales logarithmically with ARL for both distributed and group-wise schemes. Furthermore, we can also see that the delay and its slope of the group-wise scheme are smaller than those of the distributed scheme.

\bibliographystyle{siam}
\bibliography{macros,sensornetwork,secrecy}

\end{document}